\newcommand*{\Scale}[2][4]{\scalebox{#1}{$#2$}}%
\newtheorem{theorem}{Theorem}[section]
\newtheorem{lemma}[theorem]{Lemma}
\newtheorem{proposition}[theorem]{Proposition}
\newtheorem{corollary}[theorem]{Corollary}
\newtheorem{fact}[theorem]{Fact}
\def \Z{\mathbb{Z}}
\def \Q{\mathbb{Q}}
\def \N{\mathbb{N}}
\def \ge{\geqslant}
\title[Grothendieck rings of ordered subgroups of $\Q$]{Grothendieck rings of ordered subgroups of $\Q$}
\author[Bhardwaj]{Neer Bhardwaj$^\dagger$ }
\thanks{$^\dagger$Bhardwaj was partially funded by KU Leuven IF C16/23/010}
\address{Department of Mathematics, KU Leuven, B-3001 Leuven, Belgium}
\email{nbhardwaj@msri.org}
\author[Moonen]{Frodo Moonen}
\address{Department of Mathematics, KU Leuven, B-3001 Leuven, Belgium}
\email{frodo.moonen@student.kuleuven.be}
\subjclass[2020]{Primary 03C07, 03C64, 06F20}
\keywords{Archimedean ordered abelian groups, Grothendieck ring}
\begin{document}

\begin{abstract}
Let $G$ be a proper subgroup of $\Q$ and $S_G$ be the set of primes $p$ for which $G$ is $p$-divisible. 
We show that the model-theoretic Grothendieck ring of the ordered abelian  group 
$(G;+,<)$ is a  quotient of  $(\Z/q\Z)[T]/(T+T^2)$, where $q$ is the largest odd integer that divides $p-1$ for all $p \notin S_G$. 

This implies that the Grothendieck ring of $(G;+,<)$ is trivial in various salient cases, for example when $S_G$ is finite, or when $S_G$ does not contain some prime of the form $2^n+1$, $n\in \N$.
\end{abstract}

\maketitle

\section{Introduction}

\noindent
The model-theoretic notion of a  Grothendieck ring  was introduced in \cite{KS, DL} and captures interesting combinatorial and geometric properties of sets definable in a first-order structure.  These objects often play a crucial role in motivic integration \cite{HK, CH1, SV}, and Grothendieck rings of ordered abelian groups are especially relevant in this context. In this paper, we focus on computing the Grothendieck rings of subgroups of $\Q$ in the ordered group language. The edge cases are known: the Grothendieck ring of $(\Z;+,<)$  is trivial \cite{KS}, and  for  the o-minimal structure $(\Q;+,<)$ this ring is isomorphic to $\Z[T]/(T^2+T)$  \cite{KF}.

\subsection*{Grothendieck ring of a structure} 
For an $\mathcal{L}$-structure $\mathcal{M}$, we denote by $\text{Def}(\mathcal{M})$ the family of definable (with parameters) subsets of $\mathcal{M}$. 
The {\em Grothendieck group} of $\mathcal{M}$ is the abelian group generated by symbols $[X]$, $X\in \text{Def}(\mathcal{M})$, with the following relations:\begin{itemize}
    \item $[X]=[Y]$ when $X$ and $Y$ are definably isomorphic,
    \item $[X\cup Y]=[X]+[Y]$ when $X\cap Y=\emptyset$.
\end{itemize}
Extending this group  by multiplication defined by $[X][Y]=[X\times Y]$ we obtain the {\em Grothendieck ring} of $\mathcal{M}$; denoted by $K_0(\mathcal{M})$. We will refer to $[X]$ as the {\em value} of the definable set $X$ in $K_0(\mathcal{M})$ .

\medskip
\noindent
Throughout the paper, $p$ always denotes a prime integer and $G$ is a proper subgroup of $\Q$. We let $S_G$ denote the set of $p$ such that for every $x\in G$ there is $y\in G$ with $p\cdot y=x$. 
Hence $G$ is $p$-divisible if (and only if) $p\in S_G$, and $S_G$  is always a proper subset of the set of integer primes since $G$ is a proper subgroup of $\Q$.
 The focus of this article is to establish the following statement.

\begin{theorem}\label{main}
The Grothendieck ring of the ordered abelian group $(G;+,<)$ is a quotient of $(\Z/q\Z)[T]/(T+T^2)$, where $q$ is the largest odd integer which divides $p-1$ for all $p\notin S_G$.
\end{theorem}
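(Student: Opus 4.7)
The plan is to construct a surjection $\phi\colon (\Z/q\Z)[T]/(T+T^2)\twoheadrightarrow K'$, where $K'\subseteq K_0(\ZS)$ denotes the subring generated by values of unary definable sets, sending $T\mapsto [(0,1)\cap\Z_S]$. This reduces to showing that $K'$ is generated by $T$ and $1$, and that both $T(T+1)=0$ and $q\cdot 1=0$ hold in $K_0(\ZS)$.

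First I invoke quantifier elimination for $(\Z_S;+,<)$ in the language expanded by divisibility predicates modulo $n$ for each positive integer $n$ whose prime factors lie outside $S$ (a Presburger-style fact resting on $\Z_S/n\Z_S\cong\Z/n\Z$). Every unary definable set then decomposes as a finite disjoint union of cells $I\cap(c+n\Z_S)$. The maps $x\mapsto x+a$ for $a\in\Z_S$ and $x\mapsto ux$ for $u\in\Z_S^{\times}$ are definable bijections of $\Z_S$, and every positive $L\in\Z_S$ factors uniquely as $uk$ with $u\in\Z_S^{\times}$ positive and $k\in\Z_{>0}$ having prime factors outside $S$. Each bounded cell then reduces to the standard form $(0,k)\cap(c+n\Z_S)$, and direct calculation yields $[(0,k)\cap\Z_S]=kT+(k-1)$ and $[(0,n)\cap(c+n\Z_S)]$ equals $T$ if $c\equiv 0\pmod n$ and $T+1$ otherwise (by translating $\{x\in(0,n):n\mid x\}$ by $c$ and splitting at $x=n$, the image matches the desired coset up to a single integer boundary point). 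Unbounded pieces reduce via $[(0,\infty)]=T+1+[(1,\infty)]$ together with $(1,\infty)\cong(0,\infty)$ under $x\mapsto x-1$, forcing $T+1=0$ in $K_0(\ZS)$; consequently $U:=[(0,\infty)]$ and $[\Z_S]=2U+1$ collapse to constants in $K'$, and $K'$ is generated by $T$ and $1$.

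The relation $T^2+T=0$ then follows either directly from $T+1=0$ or independently from a 2D scissors argument on $(0,1)^2\subset\Z_S^2$ decomposed along the diagonal. For the torsion $q\cdot 1=0$, the decomposition $\Z_S=\bigsqcup_{i=0}^{p-1}(i+p\Z_S)$ combined with the definable bijection $p\Z_S\cong\Z_S$ via $y\mapsto py$ yields $(p-1)[\Z_S]=0$ for each $p\notin S$, i.e.\ $(p-1)(2U+1)=0$. Analogous decompositions of $(0,\infty)$ and of bounded intervals, together with the action of $\Z_S^{\times}$ on residue classes modulo $p$ (especially the involution $x\mapsto -x$, which pairs residue classes $i$ and $-i$), transfer this relation into a torsion relation on the constant $1$. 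The restriction to the odd part of $\gcd\{p-1:p\notin S\}$ reflects that $x\mapsto -x$ is always a definable automorphism and absorbs the factor of $2$ in $p-1$ for odd primes, leaving only odd-order torsion on $1$.

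The main obstacle is the last step: $(p-1)[\Z_S]=0$ gives torsion on $2U+1$ rather than on $1$ directly, so one must combine it with $\Z_S^{\times}$-induced bijections among residue-class pieces and carefully track the contribution of $x\mapsto -x$ to strip off the even part of the torsion, yielding exactly $q\cdot 1=0$ rather than $\gcd\{p-1:p\notin S\}\cdot 1=0$.
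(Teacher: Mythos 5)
There are two genuine gaps, both at the heart of the statement. First, your choice of generator is wrong: you send $T$ to the \emph{bounded} interval $[(0,1)\cap\Z_S]$, but your own relation $T+1=0$ (obtained from $[(0,\infty)]=[(0,1)]+[\{1\}]+[(1,\infty)]$ and translation) shows this class is just $-1$, so the image of your map $\phi$ lies in the subring generated by $1$ (and $\tfrac12$). For $\phi$ to be onto the ring generated by \emph{all} unary values you must hit $U:=[(0,\infty)]$, and your assertion that $U$ ``collapses to a constant in $K'$'' is never justified — cancelling $U$ from $U=T+1+U$ gives a relation on $T$, not on $U$. In the paper the generator corresponding to $T$ is precisely $X=[(0,\infty)]$, and the relation $X^2+X=0$ is proved by cutting $I^2$, $I=(0,\infty)$, along the diagonal and using $(x,y)\mapsto(x,x+y)$, which maps $I^2$ onto the region strictly above the diagonal; note this map does \emph{not} work for your alternative square $(0,1)^2$, so that variant of the scissors argument also fails. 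Moreover, you never treat intervals with an endpoint in $\Q\setminus\Z_S$, whose classes ($-\tfrac12$, $0$, $X+\tfrac12$) are values of unary definable sets and must be accounted for.

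Second, the torsion step, which you yourself flag as ``the main obstacle,'' is exactly where your proposal has no proof. Your residue-class decomposition gives $(p-1)(2U+1)=0$, but the passage to $p-1=0$ is not achieved by the involution $x\mapsto-x$: pairing the classes $i$ and $-i$ modulo $p$ only reproduces $(p-1)(2U+1)=0$ and cannot strip any factor. The paper's mechanism is different: multiply $(p-1)(2U+1)=0$ by $2U+1$ and use $(2U+1)^2=4(U^2+U)+1=1$ (so the relation $U^2+U=0$ is indispensable, not optional) to get $p-1=0$; then the even part of $p-1$ is removed because $2$ is a \emph{unit} in $K_0(\ZS)$, which is proved by splitting $(0,1)$ at a point $\tfrac{1}{n+1}\notin\Z_S$ (possible since $S$ is a proper subset of the primes) to get $[(0,1)]=2\,[(0,\tfrac{1}{n+1})]=-1$. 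Your argument never establishes invertibility of $2$ — indeed it never uses endpoints outside $\Z_S$ at all — so the reduction from $\gcd\{p-1:p\notin S\}$ to its odd part $q$ is not obtained. Without these two ingredients ($X$ as generator with $X^2+X=0$, and $2$ invertible plus $(2X+1)^2=1$), the proposal does not prove the theorem.
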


\noindent
We collect various ingredients and complete the proof of this theorem at the end of the paper, although note that  the triviality of the Grothendieck ring of $(G;+,<)$ for various cases is already  established  in Cor~\ref{maincor}. Apart from determining the Grothendieck ring, it is also an important theme to study definable sets up to definable bijection, sometimes even up to some special definable bijections, depending on the context. This leads to some finer classification results, see for example \cite{C1, Cluckers, CH2}, but we  do not address this finer challenge in this paper.


There has been much work on Grothendieck rings of various structures in the language of rings. It follows easily that the   Grothendieck ring of real closed fields is given by $\Z$ \cite{KS}, and in contrast the same paper shows that the Grothendieck ring of the complex field embeds a polynomial ring over $\Z$ generated by continuum many indeterminates.  Cluckers and Haskell showed in \cite{Cluckers_Qp} that the Grothendieck ring of the field of $p$-adic numbers is trivial. Moreover,   the Grothendieck ring of formal Laurent series over $p$-adic numbers and formal Laurent series over local fields of strictly positive characteristic are also trivial by work of Cluckers \cite{Cluckers_Laurent}. 
See also \cite{EE} for another exact computation of the Grothendieck ring of a first-order structure.



\subsection*{Acknowledgements} Our foremost  thanks goes to Pierre Touchard for suggesting this research direction for the second author's Masters dissertation topic. We also express our gratitude to Raf Cluckers for his support and guidance throughout this project. In addition, we are grateful to  Pierre Touchard, Mathias Stout, and Floris Vermeulen for helpful discussions; indeed we attribute the idea for Lemma~\ref{mainL} to these discussions.

\section{Grothendieck ring values of unary definable sets}\label{intervalvalues}

\noindent


Let $G$ be a proper subgroup of $\Q$. As defined in the introduction, $S_G$ is the set of primes $p$ for which $G$ is $p$-divisible. For every positive integer $n$,  we let $n_{S_G}$ denote the positive integer that divides $n$, such that only primes in $S_G$ divide $n_{S_G}$, and no prime in $S_G$ divides $n/n_{S_G}$. Also for $n/n_{S_G}=p_{i_1}^{e_1}\cdots p_{i_m}^{e_m}$, we set $n_{K_G}:=p_{i_1}^{k_{i_1}}\cdots p_{i_m}^{k_{i_m}}$, the product of the prime factors of $n/n_{S_G}$ raised to the power up to which they are invertible in $G$. This means that if  $a$ is the smallest positive integer in $G$, then $a/n_{K_G}\in G$, and if $p$ divides $n_{K_G}$, then $a/(p\cdot n_K)\notin G$.


For any $a,b\in \Z$ with $\text{gcd}(a,b)=1$, it is easy to see that the Grothendieck ring of $G$ and $\frac{a}{b}G$ are the same. Thus it suffices to only consider subgroups of $\Q$ which contain $\Z$. Furthermore, we only need to consider the case where $G$ is dense in $\Q$ with respect to the order topology. Indeed, otherwise there is $a/b\in \Q$ with $\text{gcd}(a,b)=1$ such that $\frac{a}{b}G=\Z$ and the Grothendieck ring of $(G;+,<)$ is trivial. Hence we assume going forward that 
$$ \Z\ \subset G \subset \Q\qquad \text{ and }\qquad G \text{ is dense in } \Q.$$

\subsection{Values of intervals } In the rest of this section,  we  compute certain values realized  by sets of the form $(a,b)\cap G$, where $a,b\in \Q$ with $a<b$. We call such sets {\em $\Q$-intervals} in $G$ and when the context is clear, we abuse notation and use $(a,b)$ to denote the $\Q$-interval  $(a,b)\cap G$. 

These sets are clearly definable in $(G;+,<)$, and  we show in Proposition~\ref{quasi2}  that the Grothendieck ring of this structure is in fact generated by the values of $\Q$-intervals. Hence one major focus of this paper is  to study these values, and we adapt the results of   \cite[Claim 7]{KF} and employ  various number theoretic tricks for this purpose.

\begin{lemma}\label{vals}
The following hold for bounded $\Q$-intervals in $G$.
    \begin{enumerate}[label=(\roman*)]
        \item $[(a,b)]=-1$ if $a,b\in G$.
        \item $[(a,b)]=-\frac{1}{2}$ for some $a\in G$ and $b\in \mathbb{Q}\backslash G$.
    \end{enumerate}
\end{lemma}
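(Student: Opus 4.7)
For (i), my plan is to reduce to $a = 0$ via the definable translation $x \mapsto x - a$, then adapt \cite[Claim 7]{KF} by using any prime $p \in S$ in place of $2$. Since $b \in \Z_S$ and $1/p \in \Z_S$, the $p - 1$ division points $kb/p$ for $1 \le k \le p - 1$ all belong to $\Z_S$ and partition $(0, b) \cap \Z_S$ into $p$ sub-intervals, each definably isomorphic to $(0, b/p) \cap \Z_S$ via translation; and $(0, b/p) \cap \Z_S$ is in turn definably isomorphic to $(0, b) \cap \Z_S$ via $x \mapsto p x$. Taking Grothendieck values yields $[(0, b)] = p\,[(0, b)] + (p - 1)$, hence $[(0, b)] = -1$.

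For (ii), I would take $q$ to be the smallest prime not in $S$ and set $a = 0$, $b = 1/q \in \Q \setminus \Z_S$. The key number-theoretic observation, and essentially the only one used, is that every prime factor of $q - 1$ is strictly less than $q$ and so lies in $S$; therefore $q - 1$ is a unit of $\Z_S$, and the map $x \mapsto (q - 1) x$ is a definable bijection of $\Z_S$.

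Writing $v_k := [(k/q, (k+1)/q) \cap \Z_S]$, I will then combine three facts. First, since the points $k/q$ for $1 \le k \le q - 1$ are not in $\Z_S$, the partition $(0, 1) \cap \Z_S = \bigsqcup_{k=0}^{q-1} (k/q, (k+1)/q) \cap \Z_S$ together with (i) yields $\sum_{k=0}^{q-1} v_k = -1$. Second, $x \mapsto (q - 1) x$ maps $(0, 1/q) \cap \Z_S$ bijectively onto $(0, (q-1)/q) \cap \Z_S = \bigsqcup_{k=0}^{q-2} (k/q, (k+1)/q) \cap \Z_S$, so $v_0 = \sum_{k=0}^{q-2} v_k$. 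Third, the definable reflection $x \mapsto 1 - x$ sends $(k/q, (k+1)/q)$ to $((q-1-k)/q, (q-k)/q)$, giving $v_{q-1} = v_0$. Putting these together, $-1 = v_{q-1} + \sum_{k=0}^{q-2} v_k = 2 v_0$, so $[(0, 1/q)] = -1/2$.

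The main obstacle will be recognizing that the minimal $q \notin S$ is the right choice of endpoint. For a larger, generic $q$, the bijections $x \mapsto p x$ for individual primes $p \in S$ may give relations among the $v_k$ insufficient to determine $[(0, 1/q)]$, for instance when $\{p \bmod q : p \in S\}$ fails to generate $(\Z/q\Z)^{\times}$. Taking $q$ minimal sidesteps this, since then $q - 1$ is itself a unit in $\Z_S$, and the single scaling $\cdot (q - 1)$, together with the reflection and (i), closes the computation in one step.
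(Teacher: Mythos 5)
Your proof is correct in both parts, but part (i) takes a genuinely different route from the paper, while part (ii) is essentially the paper's argument with more explicit bookkeeping.

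For (i), the paper avoids any subdivision entirely: it writes $[(a,\infty)] = [(a,b)] + [\{b\}] + [(b,\infty)]$ and notes that translation by $b-a \in \Z_S$ gives $[(a,\infty)] = [(b,\infty)]$, so $[(a,b)] = -[\{b\}] = -1$. This is slightly cleaner than your adaptation of \cite[Claim 7]{KF} in that it needs no prime $p \in S$ and no self-similarity relation to solve; it is a direct cancellation using an unbounded interval. Your version (partition into $p$ pieces, scale by $p$, solve $[(0,b)] = p[(0,b)] + (p-1)$) is perfectly valid since $S \neq \emptyset$ guarantees a prime $p \in S$, but it uses a genuinely different mechanism.

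For (ii), your argument and the paper's coincide in substance. The paper takes $n$ with $1/n \in \Z_S$, $1/(n+1) \notin \Z_S$ (and, as you correctly observe, the minimal such $n+1$ is forced to be the smallest prime $q \notin S$, making $n = q-1$ a unit of $\Z_S$), then writes $[(0,1)] = [(0, \sfrac{1}{q})] + [(\sfrac{1}{q}, 1)]$, reflects via $x \mapsto 1-x$, and rescales via $x \mapsto x/(q-1)$ to get $2[(0,\sfrac{1}{q})] = -1$. Your three facts in the $v_k$ notation encode exactly this: your second fact ($v_0 = \sum_{k=0}^{q-2} v_k$) is the paper's rescaling step, your third ($v_{q-1} = v_0$) is the reflection, and your first ($\sum v_k = -1$) is the application of (i); the $q$-fold partition of $(0,1)$ is an unnecessary intermediate step, since the paper splits $(0,1)$ only at $1/q$. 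Your remark explaining why the minimal $q$ is the right choice is a useful observation that the paper leaves implicit.
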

\begin{proof}
We first prove $\mathrm{(i)}$. 
For $a, b\in G$ with $a<b$ we have that
$$[(a,\infty)]\ =\ [(a,b)]+ [\{b\}] + [(b, \infty)].$$
Since $b-a\in G$, the map $x\to x+b-a$ gives a definable isomorphism from $(a,\infty)$ to $(b,\infty)$. It follows that $[(a,b)]= -1$ as desired, since the value of a singleton is 1.

For $\mathrm{(ii)}$, recall that $S_G$ does not contain all primes and take $p$ to be the smallest such prime. Suppose $k\in \N$ such that $1/p^k\in G$ and $1/p^{k+1}\notin G$. We get that
\begin{equation*} 
        \begin{split}
        [(0,\frac{1}{p^{k}})]&=[(0,\frac{1}{p^{k+1}})]+[(\frac{1}{p^{k+1}},\frac{1}{p^{k}})] \\
        &= [(0,\frac{1}{p^{k+1}})]+[(0,\frac{p-1}{p^{k+1}})]\\
        &= 2\cdot[(0,\frac{1}{p^{k+1}})]
        \end{split}
\end{equation*}
Here we use that $x\to \frac{1}{p^{k}}-x$ and $x\to x/(p-1)$ are definable isomorphisms. Note by the choice of $p$, all prime factors of $p-1$ are in $S_G$. By  $\mathrm{(i)}$, we know that $[(0,\frac{1}{p^{k}})]=-1$ and hence $[(0,\frac{1}{p^{k+1}})]=-\frac{1}{2}$.
\end{proof}

\begin{lemma} \label{valueslemma}
Set $X:= [(0,+\infty)]$. The following hold for unbounded $\Q$-intervals in $G$.
    \begin{enumerate}[label=(\roman*)]
        \item $[(a,+\infty)] =[(-\infty,a)]=X$ for any $a\in G$.
        \item $[(-\infty,+\infty)] = 2X+1$.
        \item $[(c,+\infty)] =[(-\infty,c)] =X+\frac{1}{2}$ for some $c\in \mathbb{Q}\backslash G$.

        \end{enumerate}
\end{lemma}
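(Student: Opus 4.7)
The plan is to obtain (i) and (ii) by routine translation/reflection arguments, and to reduce (iii) to a single identity generalising Lemma~\ref{vals}(ii).

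For (i), translation $x \mapsto x + a$ is a definable bijection of $\Z_S$ onto itself when $a \in \Z_S$, carrying $(0,+\infty)$ to $(a,+\infty)$; so $[(a,+\infty)] = X$. The reflection $x \mapsto -x$ then carries $(-a,+\infty)$ onto $(-\infty,a)$, giving $[(-\infty,a)] = X$. For (ii), the decomposition $\Z_S = (-\infty,0) \sqcup \{0\} \sqcup (0,+\infty)$ (disjoint since $0 \in \Z_S$) combined with (i) and $[\{0\}]=1$ gives $[(-\infty,+\infty)] = 2X+1$.

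For (iii), fix $c \in \Q \setminus \Z_S$; after applying $x \mapsto -x$ if needed, assume $c > 0$. Since $c \notin \Z_S$, the decomposition $\Z_S = (-\infty,c) \sqcup (c,+\infty)$ has no singleton at $c$, so (ii) gives $[(-\infty,c)] + [(c,+\infty)] = 2X+1$. Both equalities in (iii) then follow from the single claim $[(-\infty,c)] = [(c,+\infty)]$. Via $x \mapsto -x$ this becomes $[(-c,+\infty)] = [(c,+\infty)]$; the decomposition $(-c,+\infty) = (-c,c) \sqcup (c,+\infty)$ reduces it to $[(-c,c)] = 0$; and splitting $(-c,c) = (-c,0) \sqcup \{0\} \sqcup (0,c)$ and using $[(-c,0)] = [(0,c)]$ via $x \mapsto -x$ collapses the chain to the core identity $[(0,c)] = -\tfrac{1}{2}$.

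The remaining task is thus to prove $[(0,c)] = -\tfrac{1}{2}$ for every $c \in \Q_{>0} \setminus \Z_S$, generalising the specific instance of Lemma~\ref{vals}(ii). Write $c = p/q$ in lowest terms, and let $q''$ be the largest divisor of $q$ with all prime factors outside $S$ (so $q''>1$); then $q''c \in \Z_S$, and Lemma~\ref{vals}(i) gives $[(0,q''c)] = -1$. Decomposing
\[
(0,q''c) = \bigsqcup_{k=0}^{q''-1}(kc,(k+1)c)
\]
(no interior singletons, since $kc \notin \Z_S$ for $1 \le k \le q''-1$) and pairing the outer summands via the reflection $x \mapsto q''c - x$ (definable, since $q''c \in \Z_S$) yields $[(0,c)] = [((q''-1)c,q''c)]$. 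The identity $[(0,c)] = -\tfrac{1}{2}$ will follow provided each intermediate piece $[(kc,(k+1)c)]$ with $1 \le k \le q''-2$ vanishes. The plan for these is to telescope the identities $[(0,mc)] = [(0,c)]$ (valid via $x \mapsto x/m$ whenever $m \in \Z_S^*$) and to exploit the reflection $x \mapsto 1 - x$ on the base interval $(0,1)$ to fill in the gaps when $m$ fails to be a unit of $\Z_S$.

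The main obstacle is precisely this final combinatorial step: the scaling bijections $x \mapsto x/m$ require sufficiently many small $m$ to be units of $\Z_S$, and when $q''$ has small non-$S$ prime factors, several such decompositions and reflections must be combined carefully to cover every missing index. Handling this case analysis is where I expect the bulk of the technical work to lie.
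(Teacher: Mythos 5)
Your proofs of (i) and (ii) are correct and match the paper, which simply calls them ``immediate.'' Your reduction of (iii) to the single identity $[(0,c)]=-\tfrac12$ for every positive $c\in\Q\setminus\Z_S$ is also correct, and you have hit on the real issue: the paper claims (iii) ``follows by a direct application of Lemma~\ref{vals}(ii),'' but Lemma~\ref{vals}(ii) is an existential statement establishing $[(0,\tfrac1{n+1})]=-\tfrac12$ for \emph{one} specific $n$. That does not yield $[(0,c)]=-\tfrac12$ for arbitrary $c\notin\Z_S$, which is exactly what (iii) needs by your reduction.

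The remaining step is not ``combinatorial'' in the sense your last paragraph hopes. You want $[(kc,(k+1)c)]=0$ for $1\le k\le q''-2$, which is a special case of the statement $[(a,b)]=0$ for $a,b\in\Q\setminus\Z_S$. In the paper that is Lemma~\ref{vals2}(ii), and it is derived from Lemma~\ref{vals2}(i) under the extra hypothesis that some odd prime $q$ divides $p-1$ for every prime $p\notin S$; the key step there is a Chinese-remainder/Dirichlet argument producing a suitable prime $Q\in S$ so that multiplication by $Q$ matches up the two sub-intervals. Your telescoping plan---using $x\mapsto x/m$ for $m\in\Z_S^*$---faces the same obstruction: which integers $m$ are units of $\Z_S$ is precisely the number-theoretic datum, and when $q''$ has small prime factors outside $S$ you will typically not have enough units among $1,\dots,q''-1$ to cover every gap (already for $S=\{3\}$ and $q''$ a prime like $13$ the powers of $3$ do not generate $(\Z/13\Z)^\times$). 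So the gap you flag is genuine and cannot be closed by reflections and unit-scalings alone.

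In the paper's actual logical order, (iii) is only true in full generality once one invokes either Lemma~\ref{vals2}(i) (under the odd-prime hypothesis) or Theorem~\ref{maincor} (triviality of $K_0$ in the remaining case). Notably, (iii) is never used before those results: Lemma~\ref{mainL} only needs parts (i)--(ii) of Lemma~\ref{valueslemma} together with Lemma~\ref{lemmaX^2}, and Lemma~\ref{vals2}(iii) is re-derived from Lemma~\ref{vals2}(i) plus Lemma~\ref{valueslemma}(i). So your instinct that something substantial is missing is right; what remains for you to supply is exactly the arithmetic argument of Lemma~\ref{vals2}, not just more careful interval bookkeeping.
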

\begin{proof}
The assertions in $\mathrm{(i), (ii)}$ are immediate, and  then $\mathrm{(iii)}$  follows  by a direct application of Lemma~\ref{vals}$\mathrm{(ii)}$.
\end{proof}


\begin{lemma}\label{lemmaX^2}
In the Grothendieck ring of $(G;+,<)$ we have that $X^2+X=0$.
\end{lemma}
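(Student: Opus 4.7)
The plan is to compute $X^2 = [(0,+\infty)\times (0,+\infty)]$ in two different ways and compare the results. Both computations use only the language $(+,<)$, so every map introduced will obviously be definable in $\ZS$, and will be well-defined on $\Z_S$-points because $\Z_S$ is closed under addition and subtraction.

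For the first computation, I would partition the open first quadrant into three definable pieces: the upper open triangle $T_{<} = \{(x,y) : 0 < x < y\}$, the lower open triangle $T_{>} = \{(x,y) : 0 < y < x\}$, and the diagonal $D = \{(x,x) : x > 0\}$. The coordinate swap $(x,y)\mapsto (y,x)$ is a definable bijection between $T_{<}$ and $T_{>}$, so $[T_{<}] = [T_{>}]$. The first projection identifies $D$ with $(0,+\infty)$, hence $[D] = X$. Adding up gives
\[
X^2 \;=\; 2\,[T_{<}] + X.
\]

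For the second computation, I would evaluate $[T_{<}]$ directly by a shear. The map $(x,y)\mapsto (x, y-x)$ is a definable bijection from $T_{<}$ onto $(0,+\infty)\times (0,+\infty)$ (its inverse being $(x,z)\mapsto (x, x+z)$), since the condition $0 < x < y$ is equivalent to $x > 0$ and $y - x > 0$. Hence $[T_{<}] = X^2$, and substituting into the previous identity yields $X^2 = 2X^2 + X$, i.e.\ $X^2 + X = 0$.

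There is no real obstacle here: the argument is purely a cut-and-rearrange manipulation of the open quadrant. The only things to verify are that the swap, the projection off the diagonal, and the shear are all first-order definable in $(+,<)$ and send $\Z_S$-tuples to $\Z_S$-tuples, both of which are immediate.
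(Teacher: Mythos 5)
Your proof is correct and is essentially identical to the paper's: the same decomposition of the quadrant into the diagonal and the two open triangles, the same coordinate-swap to identify the triangles, the same projection of the diagonal onto $(0,+\infty)$, and the same shear (you use $(x,y)\mapsto(x,y-x)$, which is the inverse of the paper's $(x,y)\mapsto(x,x+y)$). No differences worth noting.
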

\begin{proof}
For $I:=(0,\infty)\cap G$, we set $$(f,g)_I:=\{(x,y)\in I\times G:\ f(x)<y<g(x)\},$$ where $f$ and $g$ are definable functions  in $(G;+,<)$, or  $f,g\in \{-\infty, +\infty\}$, with $f(x)<g(x)$ for all $x\in I$. 
We split $I^2$ into three disjoint parts and obtain
\begin{equation}\label{I^2}
    [I^2]=[(0,\text{Id}_{I})_{I}]+[\Gamma(\text{Id}_{I})]+[(\text{Id}_{I},\infty)_I],
\end{equation}
where $\text{Id}_{I}$ is the identity map on $I$ and $\Gamma(\text{Id}_{I})$ denotes its graph. We work with three definable isomorphisms given as follows.
$$f: (0,\text{Id}_{I})_{I}\rightarrow (\text{Id}_{I},\infty)_I: (x,y)\rightarrow (y,x)$$
$$g: I^2\rightarrow (\text{Id}_{I},\infty)_I: (x,y)\rightarrow (x,x+y)$$
$$h: \Gamma(\text{Id}_{I})\rightarrow I: (x,x)\rightarrow x$$
Hence we may  rewrite (\ref{I^2}) as
$$[I^2]=[I^2]+[I]+[I^2],$$
and it follows that $X^2+X=0$ as claimed.
\end{proof}

\subsection{Torsion from primes not in $S_G$}
We show that the primes not in $S_G$ induce a certain precise torsion in the Grothendieck ring of $(G;+,<)$. We shall need the following simple fact.

\begin{fact}\label{div}
Suppose $n\ge 2$ such that $n_{S_G}=1$. Then for all $y\in G$, $n$ divides exactly one of $y, y+\frac{1}{n_{K_G}}, \ldots, y+\frac{n-1}{n_{K_G}}$ in $G$. 
\end{fact}
\begin{proof}
Let $n\ge 2$ such that $n_{S_G}=1$ and   $y=\frac{a}{b}\in G$, with $a$ and $b$ coprime, be given. 
By the definition of $n_{K_G}$, there are $u, v \in \Z$ such that $b=u\cdot v$,  $u$ divides $n_{K_G}$, and $\text{gcd}(v,n_{K_G})=1$. 
The fact that $\text{gcd}(v,n_{K_G})=1$ implies that $\text{gcd}(v,n)=1$ and we have that
$$\frac{a}{b}+\frac{i}{n_{K_G}}\ =\ \frac{a\cdot n_{K_G}/u+vi}{n_{K_G}v}.$$
Since $\text{gcd}(v,n)=1$, there is  unique $i\in \{0,\ldots n-1\}$ such that $$a\cdot n_{K_G}/u+vi\ =\ 0\mod n,$$
and the proof is complete.
\end{proof}


\begin{lemma}\label{mainL}
Consider a prime number $p$ such that  $p\notin S_G$ and let $k\ge 0$ be such that $1/p^k\in G$ and $1/p^{k+1}\notin G$. Then in the Grothendieck ring of $(G;+,<)$ we have that $q=0$ where $q$ is the largest odd factor of $p-1$.
\end{lemma}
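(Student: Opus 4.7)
The plan is to partition $(0,\infty)\cap\Z_S$ into residue classes modulo $p$ (possible by Fact~\ref{div} since $p\notin S$), compute each class's value using Lemma~\ref{valueslemma}, and sum to obtain a torsion relation in the Grothendieck ring. Concretely, set $I = (0,\infty)\cap\Z_S$ and $B_r = \{x\in I : x \equiv r \pmod p\}$ for $r \in \{0, 1, \dots, p-1\}$, giving $I = \bigsqcup_{r=0}^{p-1} B_r$. Multiplication by $p$ is a definable bijection $I \to B_0$, so $[B_0] = X$. For $r \in \{1, \dots, p-1\}$, the map $y \mapsto r+py$ is a definable bijection from $(-r/p, \infty)\cap\Z_S$ onto $B_r$; since $p \nmid r$ and $p \notin S$ imply $-r/p \in \Q\setminus\Z_S$, Lemma~\ref{valueslemma}(iii) yields $2[B_r] = 2X+1$.

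Summing the partition gives $X = [B_0] + \sum_{r=1}^{p-1}[B_r]$, hence $\sum_{r=1}^{p-1}[B_r] = 0$; doubling produces the relation $(p-1)(2X+1) = 0$. From Lemma~\ref{lemmaX^2} we have $X^2 + X = 0$, so $(2X+1)^2 = 4(X^2+X)+1 = 1$, making $2X+1$ a unit. Multiplying the relation by $2X+1$ collapses it to $(p-1)\cdot 1 = 0$ in $K_0(\ZS)$.

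To sharpen $p-1 = 0$ to $q = 0$ (where $p-1 = 2^k q$ with $q$ odd), I would iteratively use the element $y \in K_0(\ZS)$ supplied by Lemma~\ref{vals}(ii), which satisfies $2y+1 = 0$. The key observation is a halving step: if $n \cdot 1 = 0$ with $n = 2n'$ even, then multiplying the identity $2y+1 = 0$ by the scalar $n'$ gives $n'(2y+1) = 2n'y + n' = ny + n'$, and since $n \cdot 1 = 0$ implies $ny = 0$, this equals $n'$; comparing with $n' \cdot 0 = 0$ forces $n' \cdot 1 = 0$. Applying this step $k$ times starting from $n = p-1$ reduces the vanishing scalar to the odd part $q$, yielding $q = 0$ as desired. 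The essential obstacle is precisely this extraction of the odd part: the residue class partition alone only gives $p-1 = 0$, and the $-1/2$ element from Lemma~\ref{vals}(ii) is indispensable for removing the factor $2^k$.
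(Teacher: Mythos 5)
Your proposal is correct and follows essentially the same route as the paper: partition a full orbit of residue classes modulo $p$, obtain the relation $(p-1)(2X+1)=0$, use $X^2+X=0$ from Lemma~\ref{lemmaX^2} to recognize $2X+1$ as a unit and conclude $p-1=0$, then extract the odd part using the $-\frac{1}{2}$ element from Lemma~\ref{vals}(ii). The only cosmetic difference is that you decompose $(0,\infty)\cap\Z_S$ and route the computation of $[B_r]$ through Lemma~\ref{valueslemma}(iii), whereas the paper decomposes all of $\Z_S$ into classes each isomorphic to $\Z_S$ itself, arriving at the same relation $2X+1 = p(2X+1)$ more directly.
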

\begin{proof}
The idea is to partition $(-\infty,+\infty)\cap G$ into equivalence classes modulo $p$. For $i=0, \ldots, p-1$, set  $N_i\coloneq\{x\in G:\ p\mid (x+\frac{i}{p^k})\}$.  Then by Lemma~\ref{div} we have 
$$2X+1\ =\ [(-\infty,+\infty)]\ =\ \sum_{i=0}^{p-1}[N_i].$$
For $i=0,\ldots,p-1$ we have a definable bijection $N_i \to  G$ given by $x\to\frac{x+\frac{i}{p^k}}{p}$. This implies that
$$2X+1\ =\ p\cdot (2X+1)$$
and hence $(p-1)(2X+1)=0$. Multiplying both sides by $2X+1$ and employing Lemma~\ref{lemmaX^2} we get that $p-1=0$. Since $2$ is invertible in the Grothendieck ring of $(G;+,<)$ by Lemma~\ref{vals}$\mathrm{(ii)}$, the desired conclusion follows.
\end{proof}

\begin{corollary}\label{maincor}

The Grothendieck ring of the ordered abelian group $(G;+,<)$ is trivial if no odd prime  divides $p-1$ for every prime $p\notin S_G$.    
\end{corollary}

\begin{corollary}\label{cases}
The Grothendieck ring of $(G;+,<)$ is trivial in the following cases:
\begin{itemize}
 \item $S_G$ contains only finitely many primes.
    \item $S_G$ does not contain some prime of form $2^n+1$, $n\in \N$. In other words, $S_G$ does not contain $2$ or a Fermat prime.
\end{itemize}
\begin{proof}
The second assertion is immediate. For the first assertion, note that  for any odd integer $q>1$, the arithmetic progression $qm+2$, $m\in \N$, contains infinitely many primes $p$ by the Dirichlet prime number theorem and for any such prime $p$, we have that such that $q\nmid (p-1)$.    
\end{proof}

\end{corollary}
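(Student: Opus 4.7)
The plan is to reduce both assertions to Theorem~\ref{maincor}, whose hypothesis says that for every odd prime $\ell$, some prime $p \notin S$ satisfies $\ell \nmid p - 1$. Equivalently, via Lemma~\ref{mainL}, it suffices to produce a single prime $p \notin S$ whose largest odd factor of $p - 1$ equals $1$, since this immediately forces $1 = 0$ in $K_0(\ZS)$.

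For the second assertion, the proof should be essentially immediate. The hypothesis furnishes a prime $p$ of the form $2^n + 1$ outside $S$: every such prime is outside $S$ by assumption, and at least one exists (e.g.\ $p = 2$ corresponding to $n = 0$, or the Fermat prime $p = 3$). For such $p$ we have $p - 1 = 2^n$, a power of $2$, so no odd prime divides $p - 1$ at all. This single choice of $p$ verifies the hypothesis of Theorem~\ref{maincor} uniformly in $\ell$; equivalently, Lemma~\ref{mainL} applied to this $p$ directly yields $1 = 0$ in $K_0(\ZS)$.

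For the first assertion, with $S$ finite, I would fix an arbitrary odd prime $\ell$ and seek some prime $p \notin S$ with $\ell \nmid p - 1$. The natural tool here is Dirichlet's theorem on primes in arithmetic progressions: since $\gcd(\ell, 2) = 1$, the progression $\{\ell m + 2 : m \in \mathbb{N}\}$ contains infinitely many primes, and each such $p$ satisfies $p \equiv 2 \pmod{\ell}$, so $p - 1 \equiv 1 \pmod{\ell}$ and hence $\ell \nmid p - 1$. Since $S$ is finite, at most finitely many of these primes can lie in $S$, so we may always choose one outside $S$. Theorem~\ref{maincor} then applies.

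I do not foresee any substantive obstacle: both parts amount to plugging an appropriate family of primes into the hypothesis of Theorem~\ref{maincor}, and the only nontrivial external input is Dirichlet's theorem, which is invoked uniformly over odd primes $\ell$ for the first assertion.
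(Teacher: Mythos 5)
Your proof is correct and follows essentially the same route as the paper: for the second assertion you take the prime $p=2$ (or $p=3$) outside $S$ so that $p-1$ is a power of two, and for the first assertion you invoke Dirichlet's theorem on the progression $\ell m + 2$ to find, for each odd prime $\ell$, a prime $p\notin S$ with $\ell \nmid p-1$, then apply Theorem~\ref{maincor}. The only minor difference is your additional remark that the second case can also be seen directly from Lemma~\ref{mainL}, which is a nice observation but not a departure from the paper's argument.
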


\subsection{Values of intervals revisited} Our next result shows  that the ring generated by the Grothendieck ring values of unary definable sets in $(G;+,<)$ is a quotient of $(\Z/q\Z)[T]/(T+T^2)$, where $q$ is the largest odd integer which divides $p-1$ for all $p\notin S_G$.

\begin{lemma}\label{vals2}
Suppose there is an odd prime $q$ which divides $p-1$ for all primes $p\notin S_G$. Then the following holds for $\Q$-intervals in $G$.
    \begin{enumerate}[label=(\roman*)]
        \item $[(a,b)]=-\frac{1}{2}$ if $a\in G$ and $b\in \mathbb{Q}\backslash G$ or vice versa.
        \item $[(a,b)]=0$ if $a,b\in \mathbb{Q}\backslash G$.
        \item $[(a,+\infty)]=[(-\infty,a)]=X+\frac{1}{2}$ if $a\in \mathbb{Q}\backslash G$.
    \end{enumerate}
\end{lemma}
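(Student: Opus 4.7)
The plan is to establish (iii) first by constructing, for each $a \in \Q \setminus \Z_S$, a definable bijection between $(a, +\infty) \cap \Z_S$ and $(-a, +\infty) \cap \Z_S$, and then to deduce (i) and (ii) by additive splitting. Since $a \notin \Z_S$, no singleton contributes when splitting the real line, and the reflection $x \to -x$ identifies $(-\infty, a)$ with $(-a, +\infty)$; Lemma~\ref{valueslemma}(ii) then yields $[(a, +\infty)] + [(-a, +\infty)] = 2X + 1$. Any such bijection will force $2[(a, +\infty)] = 2X + 1$, hence $[(a, +\infty)] = X + \frac{1}{2}$, since $2$ is invertible in the Grothendieck ring by Lemma~\ref{vals}(ii).

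For the bijection I will use an affine map $x \to mx + c$ with $m \in \Z_S^\times$ a positive integer and $c \in \Z_S$; requiring $ma + c = -a$ forces $c = -a(m+1)$, so the task reduces to finding a positive integer $m$ all of whose prime factors lie in $S$ and with $a(m+1) \in \Z_S$. Writing $a = r/s$ in lowest terms and letting $t$ be the largest divisor of $s$ composed of primes not in $S$, this becomes the number-theoretic request $m \equiv -1 \pmod{t}$. The hypothesis of the lemma enters at precisely this step, and I expect it to be the main obstacle: by Dirichlet's theorem on primes in arithmetic progressions I will produce a prime $q$ with $q \equiv -1 \pmod{t}$ and $q \not\equiv 1 \pmod{\ell}$, where $\ell$ is the odd prime from the hypothesis, and take $m = q$. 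The second congruence forces $q \in S$ by the hypothesis, while compatibility of the two congruences is automatic when $\ell \mid t$ (since $\ell$ is odd, so $-1 \not\equiv 1 \pmod \ell$) and otherwise follows from CRT combined with any residue $\not\equiv 1 \pmod \ell$ modulo $\ell$.

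Granted (iii), the remaining two parts fall out quickly. For (ii), if $a < b$ are both in $\Q \setminus \Z_S$, then $[(a, +\infty)] = [(a, b)] + [(b, +\infty)]$ (no singleton at $b$), and substituting the value from (iii) on both sides yields $[(a, b)] = 0$. For (i), the reflection $x \to -x$ reduces to the case $a \in \Z_S$, $b \in \Q \setminus \Z_S$; translation by $-a \in \Z_S$ reduces further to $a = 0$; and then $[(0, +\infty)] = [(0, b)] + [(b, +\infty)]$ combined with (iii) and Lemma~\ref{valueslemma}(i) gives $[(0, b)] = X - (X + \frac{1}{2}) = -\frac{1}{2}$.
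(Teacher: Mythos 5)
Your proof is correct, and it takes a genuinely different route from the paper's. The paper proves part (i) first: after reducing to $a=0$, $b=m/n<1$ with $m,n$ built from primes outside $S$, it splits $(0,1)$ into three pieces using the reflection $x\mapsto 1-x$, reduces to showing $[(\tfrac{i}{n},\tfrac{j}{n})]=0$ for $i,j$ coprime to $n$, and establishes that by inserting an intermediate point $c\in\Z_S$ and merging the two resulting pieces through a translation and a scaling by a prime $Q\in S$ chosen to satisfy $Qj\equiv i \pmod n$; parts (ii) and (iii) are then deduced from (i). You instead go straight for (iii): the symmetry $x\mapsto -x$ gives $[(a,\infty)]+[(-a,\infty)]=2X+1$, and a single affine bijection $x\mapsto Qx - a(Q+1)$ with $Q\in S$ prime and $Q\equiv -1 \pmod t$ (where $t$ is the non-$S$ part of the denominator of $a$) identifies the two summands, so invertibility of $2$ finishes (iii); then (ii) and (i) drop out by splitting $(a,\infty)$ and $(0,\infty)$ respectively. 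The number-theoretic engine is the same in both arguments — Dirichlet plus CRT, with the odd prime $\ell$ from the hypothesis used to force $Q\not\equiv 1\pmod\ell$ and hence $Q\in S$ (note $\ell\in S$ automatically, so the case $\ell\mid t$ you mention never arises, but that does not hurt you). Your route is arguably a bit cleaner: one bijection does all the work, with no detour through bounded intervals $(\tfrac{i}{n},\tfrac{j}{n})$, and it also quietly fills a gap in the paper's Lemma~\ref{valueslemma}(iii), which asserts $[(c,\infty)]=X+\tfrac12$ for arbitrary $c\notin\Z_S$ but is justified only by Lemma~\ref{vals}(ii), which establishes this for one particular $c$. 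One small point worth tightening in the write-up: when invoking Dirichlet you should note explicitly that the residue you pick modulo $\ell$ must avoid $0$ as well as $1$, which is possible since $\ell\ge 3$.
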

\begin{proof}
We begin with the proof of $\mathrm{(i)}$. It suffices to consider the case of $a\in G$ and $b\in \Q\setminus G$. 

By translating suitably we may assume that $a=0$ and $b=m/n\notin G$ such that $b<1$ and $m,n\in \N$ with $\text{gcd}(m,n)=1$. Since multiplying and dividing by elements of $S_G$  are definable isomorphisms in $(G;+,<)$, we may assume without loss of generality that all prime factors of $m$ and $n$ are not in $S_G$.
For $b<1/2$, we have that
\begin{equation*}
        \begin{split}
        [(0,1)]&=[(0,\frac{m}{n})]+[(\frac{m}{n},\frac{n-m}{n})]+[(\frac{n-m}{n},1)]\\
        &=2[(0,\frac{m}{n})]+[(\frac{m}{n},\frac{n-m}{n})].\\
        \end{split}
\end{equation*}
We will show that $[(\frac{m}{n},\frac{n-m}{n})]=0$. It then follows that $[(0,\frac{m}{n})]=-1/2$ as desired for $\frac{m}{n}<1/2$. From this, we also obtain the case $b>1/2$ as follows
\begin{equation*}
        \begin{split}
        [(0,\frac{m}{n})]&=[(0,\frac{n-m}{n})]+[(\frac{n-m}{n},\frac{m}{n})]=[(0,\frac{n-m}{n})]=-1/2.
        \end{split}
\end{equation*}

Note that our assumptions imply that $2\in S_G$ and hence $\frac{m}{n}\neq \frac{n-m}{n}$. Suppose $\frac{m}{n}<1/2$. Since $G$ is dense in $\Q$, we have $c\in G$ such that $\frac{m}{n}<c<\frac{n-m}{n}$. Using definable isomorphisms we get that
\begin{equation}\label{split}
        \begin{split}
        [(\frac{m}{n},\frac{n-m}{n})]&=[(\frac{m}{n},c)]+[\{c\}]+ [(c,\frac{n-m}{n})]\\
        &=[(\frac{m}{n}-c,0)]+[(0,\frac{n-m}{n}-c)]+1
        \end{split}
\end{equation}
Next we aim to find $Q\in S_G$ such that $n$ divides $Q+1$ in the integers. Since we can then translate the interval in the first summand of equation~(\ref{split}) 
 by the constant $T=c+Q(1-c) -\frac{(Q+1)m}{n}\in G$ and multiply the second summand of equation~(\ref{split}) by $Q$ respectively to get our desired claim as follows.
\begin{equation*}
        \begin{split}
        [(\frac{m}{n},\frac{n-m}{n})]&= [(Q(1-c)-Q\frac{m}{n},T)]+[(0,Q\big(\frac{n-m}{n}-c\big)]+1\\
        &=[Q\big(\frac{n-m}{n}-c\big),T)]+[(0,Q\big(\frac{n-m}{n}-c\big)]+1\\
        &=[(0,T)]+1\\
        &=0
        \end{split}
\end{equation*}
Here we used Lemma~\ref{vals}($\mathrm{i}$). By our assumption, we have an odd prime $q$ which divides $p-1$ for all $p\notin S_G$, and we solve
 for prime $Q$ satisfying
$$Q\equiv -1 \mod n \qquad \text{and}\qquad  Q\equiv 2 \mod q.$$
Note that $q\in S_G$ and hence we have that $q$  is coprime to $n$ as well. Then by the Chinese remainder theorem and Dirichlet's theorem on primes in arithmetic progressions we have that there is a prime $Q$ satisfying the displayed divisibility conditions. Note such a $Q$ necessarily belongs in $S_G$ since $q$ does not divide $Q-1$. This finishes the proof for $(\mathrm{i})$.

For  $(\mathrm{ii})$, we start with $c,d\in\Q\backslash G$, and take $a,b\in G$ with $a<c<d<b$.  Then  we have that
\begin{equation*}
    -1=[(a,b)]=[(a,c)]+[(c,d)]+[(d,b)]=-\frac{1}{2}+[(c,d)]-\frac{1}{2}.
\end{equation*}
It follows that $[(c,d)]=0$.

Assertion $(\mathrm{iii})$  follows by a direct application of $(\mathrm{i})$ and Lemma  \ref{valueslemma}($\mathrm{i}$).
\end{proof}

\section{Grothendieck ring of $(G;+,<)$ is generated by values of unary definable sets} \label{reduction}

\noindent

It is well-known that the ordered group of integers  admits quantifier elimination in the Presburger language -- $\big(0,1,+,<, (P_n)_{n\in \N^{\ge 1}}\big)$,  where for every $n\in \N^{\ge 1}$, $P_n$ is unary set given by $a\in P_n$ if and only if $\exists x (a=nx)$. In fact any ordered abelian group that does not have a convex definable subgroup also admits quantifier elimination in this language \cite{CH}; in this general setting 1 is defined to be the minimal positive element if such an element exists, and 1 = 0 otherwise.

So we have that the structure $\big(G;0, +, <, (P_n)_{n\in \N^{\ge 1}}\big)$ has quantifier elimination, and we use this fact to  transfer certain aspects of cell decomposition from $(\Q;+,<)$ to $(G;+,<)$. See \cite[Chapter 3]{vdD2} for details about o-minimal cells and o-minimal cell decomposition.

Consider a definable subset $A\subset G^n$. Recall that Fact~\ref{div} gives that $\neg P_{n}(x)$ is equivalent to $\bigvee_{i=1}^{n-1}P_{n}(x+\frac{i}{n_{K_G}})$ for all $n\in \N^{\ge 2}$ with $n_{S_G}=1$. Divisibility conditions for primes in $S_G$ can be dismissed since every element in $G$ is infinitely many times divisible by these primes, therefore the cases we discussed cover all possible kinds of divisibility conditions.

Using this together with quantifier elimination and usual arguments with the Presburger language
we obtain $L\in \N^{\ge 1}$ with $L_{S_G}=1$ such that $A$ is a disjoint union of sets defined by  formulas $\phi_{(\ell_1,\ldots, \ell_n)}^A$ of the following form.
$$\bigvee\ \bigg( \Big(\bigwedge_i \big(\sum_j m_{i_j}x_j=g_i\big)\Big)\ \bigwedge\ \Big(\bigwedge_j P_{L}(x_j-l_j)\Big)\ \bigwedge\ \Big(\bigwedge_i \big(c_i< \sum_j e_{i_j} x_j< d_i\big)\Big) \bigg),$$
where  $\ell_1,\ldots, \ell_n \in \{0,\ldots, \frac {L-1}{L_{K_G}}\}$ and $m_{i_j},e_{i_j}\in \mathbb{Z}$ and $g_i,c_i,d_i\in G$.

Clearly the map $x_j\to (Lz_j+\ell_j)$ gives a definable isomorphism from the  set defined by $\phi_{(\ell_1,\ldots, \ell_n)}^A$, to  the set defined by the following formula
$$\bigvee\ \bigg( \Big(\bigwedge_i \big(\sum_j m_{i_j}(Lz_j+\ell_j)=g_i\big) \Big)\ \bigwedge\ \Big(\bigwedge_i \big(c_i< \sum_j e_{i_j} (Lz_j+i_j)< d_i\big) \Big) \bigg).$$
This formula has no divisibility conditions and hence the set it defines in $\Q^n$ can be partitioned into cells. This implies that in the Grothendieck ring of $(G;+,<)$ we have that
$$[A]=\sum_{(\ell_1,\ldots, \ell_n)} A_{(\ell_1,\ldots, \ell_n)}= \sum_i [C_i\cap G^n],$$
where $C_i$ are cells in $(\Q; +,<)$.

\subsection*{Quasi-cells} For $C$ a cell in $\mathbb{Q}^n$, we say that $C \cap G^{n}$ is a  {\em quasi-cell}. Clearly the quasi-cells are definable in $G$. Our argument above shows that the Grothendieck ring of $(G;+,<)$ is generated by the values of  quasi-cells. As advertised earlier,  something much stronger is true.


\begin{proposition}\label{quasi2}
The Grothendieck ring of $(G;+,<)$ is generated by the values of unary quasi-cells.
\end{proposition}
\begin{proof}
It suffices to show that the value of all quasi-cells can be expressed as a linear combination of values of unary quasi-cells. Given a quasi-cell $C\cap G^n$, we proceed by  induction on $n$. The base-cases $1$ is immediate.

Take a quasi-cell $C'=C\cap G^{n+1}$, where $C$ is a cell in $\Q^{n+1}$. We first consider the case where $C$ is given as $\Gamma(f)$ where $f$ is a definable continuous function on a cell $A\subseteq \mathbb{Q}^{n}$. Let $A'=A\cap G^n$ and we have that
$$C'\ =\ \{(x,y)\in A'\times G:\ \sum_{i=1}^n a_ix_i +b=my\},$$
for some $a_i,b, m\in\Z$. The function $y=\frac{\sum_{i=1}^n a_ix_i+b}{m}$ is only definable on the elements of $A$ divisible by $m$. Let $m_T:=m/m_{S_G}$ and observe that an element of $G$ is divisible by $m$ if and only if it is divisible by $m_T$. The projection map onto the first $n$ co-ordinates $\pi$ gives a definable isomorphism from $C$ to $\pi(C)=A'\cap P_{m_T}(\sum_{i=1}^n a_ix_i+b)$ and we are done by induction.


The other case is that  
$$C\ =\ (f,g)_{A}\ :=\ \{ (x,t):\ x \in A \text{ and } f(x)< t< g(x) \}, $$ where $A$ is a cell in $\mathbb{Q}^{n}$, and 
$$f,g\in \{\text{continuous functions definable in } (\Q;+,<) \text{ with domain } A\} \cup \{ -\infty, \infty\},$$
with   $f(x)<g(x)$ for all  $x\in A$. Set $A'=A\cap G^n$ as before. 

If $f=-\infty$ and $g=+\infty$, then  $C'=A'\times G$ and $[C']=[A']\cdot[G]$. We now proceed to the non-trivial case of $f\neq -\infty$ and $g= +\infty$. We have that
$$C'=\{(x,y)\in A'\times G: \sum_{i=1}^n a_ix_i+b<my\},$$
for some $a_i,b, m\in\Z$.
Let $m_T:=m/m_{S_G}$ as before and observe that 
\begin{equation}\label{new_cell}
    [C']\ =\ \sum_{j=0}^{m_T-1}[\{(x,y)\in A'\times G:\ \sum_{i=1}^n a_ix_i+b<my\}\cap P_{m_T}(\sum_{i=1}^n a_ix_i+b+\frac{j}{m_K})]
\end{equation}
For  each  $j\in \{0,\ldots, m_T-1\}$,  we set $A'_j:=A'\cap P_{m_T}(\sum_{i=1}^n a_ix_i+b+\frac{j}{m_K})$ and $C'_j:=\{(x,y)\in A'_j\times G:\ \sum_{i=1}^n a_ix_i+b<my\}$, and observe that 
$$A'_j\times (\frac{-j}{m\cdot m_K},+\infty)\rightarrow C'_j\ :\ (x,t)\rightarrow (x,\frac{a_ix_i+b + \frac{j}{m_K}}{m}+t)$$
is a definable bijection in $(G;+,<)$. Hence we have that 
$$[C']\ =\ \sum_{j=0}^{m_T-1} [C'_j]\ =\ \sum_{j=0}^{m_T-1} [A'_j]\cdot  [(\frac{-j}{m\cdot m_K},+\infty)],$$ and we are done by induction. The case of  $f=-\infty$ and $g\neq \infty$ is similar.

Finally, suppose $f\neq -\infty$ and $g\neq \infty$.  There is a cell $A\subseteq \Q^n$ such that
$$[C']=[(-\infty,g)_A\cap G^{n+1}]-[\Gamma(f)\cap G^{n+1}]-[(-\infty,f)_A\cap G^{n+1}],$$
and we employ the the previous cases and induction to conclude the proof.
\end{proof}


\noindent
{\em Proof of Theorem~\ref{main}.}
If there is no odd prime which divides $p-1$ for all $p\notin S_G$, then the Grothendieck ring of $(G;+,<)$ is trivial by Theorem~\ref{maincor}. In the remaining case, by proposition ~\ref{quasi2}, we only need to consider the values realized by $\Q$-intervals. Then the desired result follows immediately by Lemmas~\ref{vals}, ~\ref{valueslemma},  ~\ref{lemmaX^2}, ~\ref{mainL}, and~\ref{vals2}.

\bibliographystyle{amsplain}
\bibliography{BM}

\end{document}